\DeclareMathOperator\artanh{artanh}
\DeclareMathOperator\arcosh{arcosh}
\theoremstyle{plain}
\newtheorem{theorem}{Theorem}
\newtheorem{lemma}[theorem]{Lemma}
\theoremstyle{definition}
\newcommand{\bR}{\mathbb{R}}
\begin{document}

\baselineskip 6mm

\title{A sharp inequality involving hyperbolic and inverse hyperbolic functions\footnote{It will appear in "Journal of Inequalities and Special Functions"}} 

\author{Roman Drnov\v sek}

\date{\today}

\begin{abstract}
We prove that  the inequality
$$  \cosh \left(  \arcosh(2 \cosh u) \cdot \tanh u \right) < \exp \left(  u  \cdot \tanh u \right)  $$
holds for all $u > 0$. 
We check with the computation program {\it Mathematica} that the ratio between the left-hand and the right-hand side 
is greater than 0,97 for all $u \ge 0$, so this is a quite sharp inequality.  It is also equivalent to any of the two inequalities: 
$$ \cosh \left( \sqrt{1 - \frac{1}{t^2}} \cdot \arcosh{2 t} \right) < 
 \exp \left( \sqrt{1 - \frac{1}{t^2}} \cdot \arcosh{t} \right)  $$
for all $t >1$, and  
$$ \cosh \left( c \cdot \arcosh{\frac{2}{\sqrt{1-c^2}}} \right) <
 \exp \left( c \cdot \arcosh{\frac{1}{\sqrt{1-c^2}}} \right) $$
for all $c \in (0,1)$.
\end {abstract}

\maketitle

\noindent
{\it Key words}:  inequalities, hyperbolic functions, inverse hyperbolic functions \\
{\it Math. Subj. Classification (2010)}: 26D07 \\

In several attemps to compute the numerical index of two-dimensional normed space equipped with an $l^p$-norm
(see \cite[Problems 2 and 3]{KMP06} or \cite[Problem 5.1]{MMP11}) 
we find a quite sharp inequality that can be added to the existing list of inequalities involving the hyperbolic functions
(see e.g. \cite{Ne12},\cite{NS11}, and \cite{NS12}). We begin with two lemmas.

\begin{lemma}
Let $x$ and $y$ be positive real numbers. Then 
\begin{equation}
\label{th}
 \tanh x \cdot \tanh y  < \tanh ( x \cdot \tanh y )  . 
\end{equation}

\end{lemma}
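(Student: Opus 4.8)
The plan is to reduce the two-variable inequality to a one-variable monotonicity statement. Since $y>0$ forces $\tanh y \in (0,1)$, I would set $\lambda = \tanh y$ and regard $\lambda$ as a fixed parameter in $(0,1)$; the asserted inequality \eqref{th} then reads $\tanh(\lambda x) > \lambda \tanh x$ for every $x>0$. Conceptually this is just the statement that $\tanh$ is strictly concave on $[0,\infty)$ and vanishes at the origin, so that scaling the argument down by a factor $\lambda<1$ shrinks the value by strictly less than $\lambda$.

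To make this precise without invoking a concavity theorem, I would fix $y>0$ and study the auxiliary function
$$F(x) = \tanh(x \tanh y) - \tanh x \cdot \tanh y, \qquad x \ge 0.$$
First I would record the boundary value $F(0)=0$. Then I would differentiate, using $(\tanh)'(t)=1/\cosh^2 t$, to obtain
$$F'(x) = \tanh y \left( \frac{1}{\cosh^2(x \tanh y)} - \frac{1}{\cosh^2 x} \right).$$
For $x>0$ one has $0 < x\tanh y < x$, and since $\cosh$ is strictly increasing on $[0,\infty)$ this yields $\cosh(x\tanh y) < \cosh x$, hence the bracket is strictly positive; as $\tanh y>0$ as well, $F'(x)>0$ on $(0,\infty)$. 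Therefore $F$ is strictly increasing, so $F(x) > F(0) = 0$ for all $x>0$, which is exactly \eqref{th}.

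The computation itself is routine; the only point that needs care is \emph{strictness}. The clean feature of the derivative route is that it delivers $F'>0$ directly on the open half-line, so I never have to worry about the fact that $\tanh''(0)=0$, which would make a boundary concavity argument slightly delicate. The main thing to get right is therefore the elementary chain $0<\tanh y<1 \Rightarrow x\tanh y<x \Rightarrow \cosh(x\tanh y)<\cosh x$, after which positivity of $F'$ is immediate.
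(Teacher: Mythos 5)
Your proof is correct, but it takes a genuinely different route from the paper. You fix $y$, form the difference $F(x)=\tanh(x\tanh y)-\tanh x\cdot\tanh y$, and run a standard calculus argument: $F(0)=0$ and $F'(x)=\tanh y\left(\cosh^{-2}(x\tanh y)-\cosh^{-2}x\right)>0$ on $(0,\infty)$, since $0<x\tanh y<x$ and $\cosh$ is strictly increasing on $[0,\infty)$; strictness comes out automatically. The paper instead applies $\artanh$ to both sides and compares power series: writing $\artanh t=\sum_{k\ge 0}t^{2k+1}/(2k+1)$ and using $(\tanh y)^{2k+1}<\tanh y$ for $k\ge 1$, it gets $\artanh(\tanh x\cdot\tanh y)<\tanh y\cdot\artanh(\tanh x)=x\tanh y$, and then applies the increasing function $\tanh$. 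Your argument is more elementary and self-contained (only differentiation and monotonicity of $\cosh$ are needed, no series manipulation), and it makes the strict inequality transparent on the open half-line. The paper's argument, while requiring the series expansion, isolates the structural reason behind the inequality --- the nonnegativity of the Taylor coefficients of $\artanh$, i.e.\ that $\artanh(\lambda t)<\lambda\,\artanh t$ for $\lambda\in(0,1)$ --- which is the same convexity phenomenon your concavity remark alludes to, just expressed on the inverse function. Either proof serves the rest of the paper equally well, since Lemma \ref{concave} only uses the statement of \eqref{th}, not its proof.
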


\begin{proof}
We will make use of the fact that the Taylor series expansion of the function $\artanh t$ has nonnegative coefficients:
$$ \artanh t = \sum_{k=0}^\infty \frac{t^{2 k + 1}}{2 k +1} \  ,  \ \  |t| < 1 . $$
Since $0 < \tanh y < 1$, we have
$$  \artanh (\tanh x \cdot \tanh y) = \sum_{k=0}^\infty \frac{(\tanh x \cdot \tanh y)^{2 k + 1}}{2 k +1} < $$
$$ <  \tanh y \cdot \sum_{k=0}^\infty \frac{(\tanh x)^{2 k + 1}}{2 k +1} = 
\tanh y \cdot \artanh (\tanh x) = x \cdot \tanh y , $$
and so \eqref{th} follows.
\end{proof}

\begin{lemma}
\label{concave}
For $0 < K < 1$, the function $\phi: [1, \infty) \to \bR$ defined by 
$$ \phi(x) = \cosh (K   \arcosh x) $$
is  strictly increasing and concave.
\end{lemma}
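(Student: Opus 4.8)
The plan is to pass to the variable $t = \arcosh x$, which runs over $[0,\infty)$ as $x$ runs over $[1,\infty)$; then $x = \cosh t$, so that $dt/dx = 1/\sinh t$ and $\phi(x) = \cosh(Kt)$. Differentiating through this substitution gives
\[
  \phi'(x) = \frac{K \sinh(Kt)}{\sinh t} ,
\]
and since $\sinh(Kt) > 0$ and $\sinh t > 0$ for $t > 0$, monotonicity is immediate: $\phi'(x) > 0$ for every $x > 1$, so $\phi$ is strictly increasing.

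For concavity I would differentiate once more. A second application of the chain rule yields
\[
  \phi''(x) = \frac{K}{\sinh^3 t}\, N(t), \qquad N(t) := K \cosh(Kt)\sinh t - \sinh(Kt)\cosh t .
\]
Because the prefactor $K/\sinh^3 t$ is positive for $t > 0$, everything reduces to showing that $N(t) < 0$ on $(0,\infty)$, which is the only real content of the lemma.

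To control $N$, I would use its boundary value and its derivative rather than trying to estimate $N$ directly. One checks $N(0) = 0$, and a short computation shows that the two ``cross'' terms cancel, leaving
\[
  N'(t) = (K^2 - 1)\sinh(Kt)\sinh t .
\]
Here $0 < K < 1$ forces $K^2 - 1 < 0$, while $\sinh(Kt)$ and $\sinh t$ are both positive for $t > 0$, so $N'(t) < 0$ throughout $(0,\infty)$. Hence $N$ starts at $0$ and strictly decreases, giving $N(t) < 0$ for all $t > 0$, and therefore $\phi''(x) < 0$ for all $x > 1$. This establishes strict concavity.

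The computation is routine throughout, so I do not anticipate a genuine obstacle; the one point that makes it work cleanly is the cancellation in $N'$, after which the sign of $K^2 - 1$ does all the remaining work. The only place requiring slight care is the left endpoint $x = 1$ (equivalently $t = 0$), where $\phi'(x) \to +\infty$. This does not affect either conclusion, since strict monotonicity and strict concavity on the open interval $(1,\infty)$, together with continuity of $\phi$ at $x = 1$, suffice.
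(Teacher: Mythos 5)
Your proof is correct, and it takes a genuinely different route from the paper's. The paper computes $\phi''$ in the variable $x$, reduces its negativity to the inequality $\cosh(K\arcosh x)\, K \sqrt{x^2-1} < \sinh(K\arcosh x)\, x$, and then disposes of this by substituting $u = \arcosh x$, $v = \artanh K$ and invoking its Lemma 1, namely $\tanh u \cdot \tanh v < \tanh(u \cdot \tanh v)$, which in turn rests on the positivity of the Taylor coefficients of $\artanh$. You instead work in the variable $t = \arcosh x$, arrive at the same sign condition in the form $N(t) = K\cosh(Kt)\sinh t - \sinh(Kt)\cosh t < 0$, and prove it self-containedly: $N(0) = 0$ and $N'(t) = (K^2-1)\sinh(Kt)\sinh t < 0$ after the cross terms cancel. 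In effect you have reproved the relevant case of the paper's Lemma 1 by elementary calculus rather than power series (dividing $N(t) < 0$ by $\cosh(Kt)\cosh t$ gives exactly $K \tanh t < \tanh(Kt)$, which is Lemma 1 with $K = \tanh v$). What each approach buys: yours makes the concavity lemma entirely self-contained, needing nothing beyond differentiation and a sign check; the paper's arrangement factors the key step out as a standalone $\tanh$ inequality of independent interest, which is then quoted here.

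One small correction: your closing remark that $\phi'(x) \to +\infty$ as $x \to 1^+$ is false. From your own formula, $\phi'(x) = K\sinh(Kt)/\sinh t \to K^2$ as $t \to 0^+$, a finite limit. This does not damage the proof — as you say, strict monotonicity and concavity on $(1,\infty)$ together with continuity at $x=1$ suffice — and in fact the finite one-sided limit makes the endpoint even less of an issue than you suggest.
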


\begin{proof}
The first derivative 
$$ \phi^\prime (x) = \sinh (K  \arcosh x)  K  \frac{1}{\sqrt{x^2-1}} $$
is clearly positive for all $x > 1$, and so $\phi$ is a strictly increasing function.
To show that $\phi$ is concave, we must prove that the second derivative 
$$ \phi^{\prime \prime} (x) = \cosh (K  \arcosh x) \,  K^2  \frac{1}{x^2-1} - 
\sinh (K  \arcosh x)  \, K  \frac{x}{(x^2-1)^{3/2}} $$
is negative for all $x > 1$, that is 
$$ \cosh (K  \arcosh x) \,  K  \sqrt{x^2-1}  < \sinh (K  \arcosh x)  \, x . $$
Setting $u =   \arcosh x$ and $v = \artanh K$, this inequality rewrites to the inequality
$$ \tanh u \cdot \tanh v  < \tanh ( u \cdot \tanh v ) $$
that holds by \eqref{th}. This completes the proof.
\end{proof}

We now prove the main result of this paper.

\begin{theorem}
\label{thm}
We have
\begin{equation}
\label{one}
\cosh \left( \sqrt{1 - \frac{1}{t^2}} \cdot \arcosh{2 t} \right) < 
 \exp \left( \sqrt{1 - \frac{1}{t^2}} \cdot \arcosh{t} \right) 
\end{equation}
for all $t >1$, or equivalently 
\begin{equation}
\label{two}
\cosh \left( c \cdot \arcosh{\frac{2}{\sqrt{1-c^2}}} \right) <
 \exp \left( c \cdot \arcosh{\frac{1}{\sqrt{1-c^2}}} \right) =
 \exp \left( c \cdot \artanh c \right) 
\end{equation}
for all $c \in (0,1)$, or equivalently 
\begin{equation}
\label{three}
\cosh \left( \arcosh(2 \cosh u) \cdot \tanh u \right) <
 \exp \left(  u  \cdot \tanh u \right) 
\end{equation}
for all $u > 0$.
\end{theorem}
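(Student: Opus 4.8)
The plan is to prove the three displayed inequalities \eqref{one}, \eqref{two}, \eqref{three} by first observing that they are related by elementary substitutions, and then establishing a single one of them — I would choose \eqref{one} — via a tangent-line estimate built on the concavity from Lemma~\ref{concave}. For the equivalences, the substitution $t = \cosh u$ (with $u > 0 \Leftrightarrow t > 1$) turns $\sqrt{1 - 1/t^2}$ into $\tanh u$ and $\arcosh t$ into $u$, carrying \eqref{one} into \eqref{three}; similarly, writing $c = \sqrt{1 - 1/t^2}$ gives $\sqrt{1-c^2} = 1/t$, which identifies \eqref{one} with \eqref{two}, the final equality there being just the fact that $\cosh w = 1/\sqrt{1-c^2}$ forces $\tanh w = c$, i.e. $\arcosh(1/\sqrt{1-c^2}) = \artanh c$. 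Hence it suffices to prove \eqref{one}.

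For the core argument, fix $t > 1$ and set $K = \sqrt{1 - 1/t^2}$, so that $0 < K < 1$ and Lemma~\ref{concave} applies to $\phi(x) = \cosh(K \arcosh x)$. I would use the \emph{strict} concavity of $\phi$ — the second derivative computed in the proof of Lemma~\ref{concave} is strictly negative — which yields the strict tangent-line bound
$$ \phi(x) < \phi(x_0) + \phi'(x_0)\,(x - x_0) \qquad (x \ne x_0). $$
The decisive choice is to anchor at $x_0 = t$ and evaluate at $x = 2t$, so that $x - x_0 = t$ and the left-hand side is exactly $\phi(2t) = \cosh(K \arcosh 2t)$, the left side of \eqref{one}.

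The crucial computation is then to recognize the right-hand side of the tangent bound as an exponential. Here $\phi(t) = \cosh(K \arcosh t)$ and, by Lemma~\ref{concave}, $\phi'(t) = \sinh(K \arcosh t)\, K / \sqrt{t^2 - 1}$, so that $\phi'(t)\cdot t = \sinh(K \arcosh t)\, K t / \sqrt{t^2 - 1}$. The point is that the specific value $K = \sqrt{t^2-1}/t$ makes the factor $K t / \sqrt{t^2-1}$ collapse to $1$, leaving $\phi'(t)\cdot t = \sinh(K \arcosh t)$. The tangent bound therefore reads
$$ \cosh(K \arcosh 2t) < \cosh(K \arcosh t) + \sinh(K \arcosh t) = \exp(K \arcosh t), $$
which is precisely \eqref{one}.

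I expect the main obstacle is not any single calculation but the structural choices that make everything fit: namely, splitting the exponential on the right of \eqref{one} as $\cosh + \sinh$ and matching it against a tangent line of $\phi$, and then realizing that evaluating at the doubled point $x = 2t$ with anchor $x_0 = t$ is exactly what makes the slope term telescope into $\sinh(K \arcosh t)$. The two points needing care are that both $x_0 = t$ and $x = 2t$ lie in the domain $[1,\infty)$ (immediate from $t > 1$), and that it is strict concavity — not merely concavity — that upgrades the estimate to the strict inequality required.
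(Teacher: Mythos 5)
Your proposal is correct and takes essentially the same route as the paper: both apply Lemma~\ref{concave} with $K = \sqrt{1-1/t^2}$, bound $\phi(2t)-\phi(t)$ by $t\,\phi'(t)$ using concavity (you via the strict tangent-line bound, the paper via the decreasing derivative and a mean-value-type estimate, which is the same use of concavity), exploit the collapse $Kt/\sqrt{t^2-1}=1$ to identify the slope term with $\sinh(K\arcosh t)$, and finish with $\cosh + \sinh = \exp$. The equivalences are likewise handled by the same substitutions, up to the cosmetic difference that you pass from \eqref{one} to \eqref{two} directly rather than through \eqref{three}.
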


\begin{proof}
Fix $t > 1$. 
% Put $K = \sqrt{1- 1/t^2}$. 
By Lemma \ref{concave}, the function $\phi: [1, \infty) \to \bR$ defined by 
$$ \phi(x) = \cosh \left( \sqrt{1 - \frac{1}{t^2}}   \arcosh x \right) $$
is  strictly increasing and concave. Therefore, its derivative 
$$ \phi^\prime (x) = \sinh \left( \sqrt{1 - \frac{1}{t^2}}   \arcosh x \right)  \sqrt{1 - \frac{1}{t^2}} \frac{1}{\sqrt{x^2-1}} ,   $$
is decreasing, and so 
$$ \max_{t \le x \le 2 t} \phi^\prime (x) = \phi^\prime (t) = \sinh \left( \sqrt{1 - \frac{1}{t^2}}   \arcosh t \right) \frac{1}{t} . $$
Now, the inequality 
$$ \phi(2 t) - \phi (t) < (2 t - t) \max_{t \le x \le 2 t} \phi^\prime (x) $$
yields that 
$$ \cosh \left( \sqrt{1 - \frac{1}{t^2}} \cdot \arcosh{2 t} \right) - 
\cosh \left( \sqrt{1 - \frac{1}{t^2}} \cdot \arcosh{t} \right) <
\sinh \left( \sqrt{1 - \frac{1}{t^2}} \cdot \arcosh{t} \right) , $$
implying the inequality \eqref{one}.

The substitution $t = \cosh u$ in \eqref{one} gives the inequality \eqref{three}, while 
the substitution $c = \tanh u$ in \eqref{three} yields the inequality \eqref{two}.
This completes the proof.
\end{proof}

Let us further explore the inequality \eqref{three}.  Since 
$$ \arcosh(2 \cosh u) = \ln (2 \cosh u+ \sqrt{4 \cosh^2 u - 1})  > $$
$$ >  \ln (2 \cosh u+ 2 \sinh u) = \ln (2 e^u) = 
\ln 2 + u , $$
the left-hand side inequality of \eqref{three} is greater than 
$$ \cosh \left( \tanh u \cdot (\ln 2 + u) \right) > \frac{1}{2} \exp \left( \tanh u \cdot (\ln 2 + u) \right) = 
2^{\tanh u - 1}  \exp \left(  u  \cdot \tanh u \right) , $$
and so we also have the inequality
\begin{equation}
\label{threeA}
2^{\tanh u - 1}  \exp \left(  u  \cdot \tanh u \right) <
\cosh \left(\arcosh(2 \cosh u) \cdot \tanh u \right) <
 \exp \left(  u  \cdot \tanh u \right) 
\end{equation}
for all $u > 0$.
Define the function $f: [0, \infty) \to \bR$ by 
$$ f(u) = \frac{\cosh \left( \arcosh(2 \cosh u) \cdot \tanh u \right) }{ \exp \left(  u  \cdot \tanh u \right)} .  $$
By \eqref{three}, we have $f(u) < 1$ for all $u > 0$, while $f(0) = 1$.
Since $\lim_{u \rightarrow \infty} \tanh u = 1$, the inequality \eqref{threeA} implies that 
$$ \lim_{u \rightarrow \infty} f(u) = 1 . $$
Furthermore, using the computation program {\it Mathematica} one can reveal a remarkable property that $f(u) > 0,972$ for all $u \ge 0$. Thus, the inequalities in Theorem \ref{thm} are surprisingly sharp.
 
\vspace{3mm}
{\it Acknowledgment.} The author acknowledges the financial support from the Slovenian Research Agency  (research core funding No. P1-0222). He also thanks the reviewer for suggestions and comments.

\vspace{2mm}

\baselineskip 5mm
\noindent
Roman Drnov\v sek \\
Department of Mathematics \\
Faculty of Mathematics and Physics \\
University of Ljubljana \\
Jadranska 19 \\
SI-1000 Ljubljana, Slovenia \\
e-mail : roman.drnovsek@fmf.uni-lj.si 

\end{document}